\newtheorem{theorem}{Theorem}[section]
\newtheorem{lem}[theorem]{Lemma}
\newtheorem{cor}[theorem]{Corollary}
\theoremstyle{definition}
\theoremstyle{remark}
\newtheorem{rem}[theorem]{Remark}
\numberwithin{equation}{section}
\begin{document}

\title[Some characterizations of the floor, ceiling and fractional part functions]{Some
characterizations of the floor, ceiling and fractional part functions with related results}

\author[M.H. Hooshmand]{M.H. Hooshmand }

\address{Department of mathematics, Shiraz Branch, Islamic Azad University, Shiraz, Iran}

\email{\tt hadi.hooshmand@gmail.com , MH.Hooshmand@iau.ac.ir}

\subjclass[2000]{33B99, 39B22}

\keywords{Floor and ceiling functions, fractional part function, decomposer function, factor subset, characterizations of special functions
\indent }
\date{}
\begin{abstract}
One of the most important issues for the frequent special functions is the uniqueness conditions of such functions.
As far as we know, there are no characterizations for the floor, ceiling, and fractional part functions in general (as real functions
 $f:\mathbb{R}\rightarrow \mathbb{R}$).
As integer-valued functions $f:\mathbb{Q}\rightarrow \mathbb{Z}$, we have found a just common
characterization for the floor and ceiling functions.
   Motivated by the topic of decomposer functions and factors of groups, we introduce and prove some characterizations of the
   above-mentioned functions.
    We also state and prove the relevant results and some of their generalities and developments.
\end{abstract}
\maketitle
\section{Introduction and Preliminaries}
\noindent
One of the features of the frequently used special functions is their uniqueness conditions which are obtained by one or more basic properties.
In such characterizations, a functional equation in which the special function is a solution usually plays the main role. 
Examples are the functional equation $f(x+1)=xf(x)$ satisfied by the gamma function $\Gamma(x)$,
 or $f(x)=a^{f(x-1)}$ with the ultra exponential function $\mbox{uxp}_a(x)$ as a solution, etc (see \cite{Art,MH6}).
But in the case of the floor, ceiling, and fractional part functions (\cite{Rub}) such
 a general characterization (as real-valued functions) was not found.
 The only uniqueness conditions we found is for the floor and ceiling functions, which
 are expressed for the special case $f:\mathbb{Q}\rightarrow \mathbb{Z}$ (as an integer-valued function defined on rational numbers)
 in \cite{Eis}.
 That unique conditions states that $f:\mathbb{Q}\rightarrow \mathbb{Z}$ satisfies the conditions
 $f(x+k)=f(x)+k$ and  $f(\frac{1}{n}f(nx))=f(x)$, for all $x\in \mathbb{Q}$, $k\in \mathbb{Z}$, and $n\in \mathbb{Z}_+$ if and only if
$f(x)=\lfloor x\rfloor$ on $\mathbb{Q}$  or $f(x)=\lceil x\rceil$ on $\mathbb{Q}$.
In the last few years of research, we have introduced and studied the decomposer, associative, canceler, parter, and co-periodic (etc.) functions on basic algebraic structures.
Motivated by the topic, we introduce and prove some uniqueness
conditions for the floor, ceiling, and fractional part functions and obtain some related results.

For this purpose, we will first review some concepts and definitions from those general works \cite{MH1,MH4,MH5} for the particular case of
the additive group of real numbers.
\\
  Let $A,B$ be two subsets of $(\mathbb{R},+)$. We call the sum $A+B$ to be direct, and denote it by $A \dot{+} B$
if the representation of every
  element of $A+B$ by $x=a+b$  with $a\in A$, $b\in B$ is unique.
It is worth noting that the following conditions are equivalent:\\
(1) $A+B=A \dot{+} B$;\\
(2) $(A-A)\cap (B-B)=\{0\}$ (where $A-A:=\{a_1-a_2:a_1,a_2\in A\}$);\\
(3)  $a+B\cap a'+B=\emptyset$, for all distinct elements $a,a'$ of $A$;\\
(4) $A+B=\dot{\bigcup}_{a\in A}a+B$ (where $\dot{\bigcup}$ denotes the disjoint union).\\
Hence, $\mathbb{R}=A\dot{+} B$ if and only if $\mathbb{R}=A+B$  and the sum $A+B$ is direct, in this case we call
$A$ (resp. $B$) a factor of $\mathbb{R}$ relative to $B$(resp. $A$). A subset $A$ is called a factor 
if it is a  factor of $\mathbb{R}$ relative to some subset $B$. For example, every subgroup is a factor
(relative to its transversal).
An important example for it is $\mathbb{R}=\mathbb{Z}\dot{+}[0,1)$ which means every real number can be
 uniquely represented as a sum of the integer and fractional parts of it.\\
 Every factorization $\mathbb{R}=A\dot{+} B$ induces
two natural projections onto $A$ and $B$ as real functions $P_{A}:\mathbb{R}\longrightarrow \mathbb{R}$ with
$P_{A}(\mathbb{R})=A$ and $P_{B}:\mathbb{R}\longrightarrow \mathbb{R}$ with
$P_{B}(\mathbb{R})=B$. Putting $f^*:=\iota_\mathbb{R}-f$, for every $f:\mathbb{R}\rightarrow \mathbb{R}$ where $\iota_\mathbb{R}$ is the
identity function in $\mathbb{R}$, we have
$P_{A}^*=P_{B}$.
 For the factorization $\mathbb{R}=\mathbb{Z}\dot{+}[0,1)$ we have $\lfloor\;\rfloor=P_{\mathbb{Z}}$
(the integer part or floor function) and $\{\;\}=P_{[0,1)}$ (the fractional or decimal part function)
which can be considered as different definitions of these special functions.
Also, we have $\lceil\;\rceil=P_{\mathbb{Z}}$ for the factorization $\mathbb{R}=\mathbb{Z}\dot{+}(-1,0]$.
We call $f:\mathbb{R}\rightarrow \mathbb{R}$ a parting projection if $f=P_{A}$ for some
factorization $\mathbb{R}=A\dot{+} B$. In \cite{MH1} it is proved that $f$ is a parting projection if and only if
it satisfies the decomposer equation:
\begin{equation}
f(f^*(x)+f(y))=f(y) \;\;\; :\;  x,y\in \mathbb{R}
\end{equation}
It is important to know that $\mathbb{R}=f^*(\mathbb{R})+f(\mathbb{R})$, for every $f:\mathbb{R}\rightarrow \mathbb{R}$, but
$\mathbb{R}=f^*(\mathbb{R})\dot{+}f(\mathbb{R})$ if and only if $f$ is a decomposer function.\\
Also, $(1.1)$ together with the condition $f^*(\mathbb{R})\leq \mathbb{R}$ (i.e., the $*$-range of $f$ is a subgroup of $(\mathbb{R},+)$)
is equivalent to the strong decomposer functional equation:
\begin{equation}
f(f^*(x)+y)=f(y) \;\;\; :\;  x,y\in \mathbb{R},
\end{equation}
and also the canceler equation $f(f(x)+y)=f(x+y)$. They have closed relationships to the associative equation
$f(f(x+y)+z)=f(f(x)+f(y+z))$, multiplicative symmetric equation  $f(f(x)+y) =f(x+f(y))$, etc.\\
It is worth noting that a subset of real numbers is a factor if and only if it is the range of a parting projection
(equivalently, decomposer function).
\section{Characterizations of the floor, ceiling and fractional functions}
Now we state and prove a uniqueness theorem that gives a characterization of the floor function.
Of course, first, we introduce it without using the concepts and notations mentioned in the previous section and
then we will get many interesting results and corollaries by using those concepts.
 \begin{theorem}[A characterization of the floor function]
 The floor function  $f=\lfloor\;\rfloor :\mathbb{R}\rightarrow \mathbb{R}$ is the only function
 satisfying the conditions:\\
 $($1$)$ $f(x-f(x)+f(y))=f(y)$ for all $x,y\in \mathbb{R}$;\\
 $($2$)$ The range of the function $x-f(x)$ is $[0,1)$;\\
 $($3$)$ $f(0)=0$.
 \end{theorem}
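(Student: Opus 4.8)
The plan is to recognize condition~(1) as the decomposer equation~$(1.1)$ in disguise, use condition~(2) to fix the $*$-range, and use condition~(3) to pin down the range as $\mathbb{Z}$. Since $f^{*}=\iota_{\mathbb{R}}-f$, condition~(1) is exactly $f(f^{*}(x)+f(y))=f(y)$, so $f$ satisfies the decomposer equation. By the characterization quoted from \cite{MH1}, $f$ is then a parting projection: there is a factorization $\mathbb{R}=A\dot{+}B$ with $f=P_{A}$, $A=f(\mathbb{R})$ and $B=f^{*}(\mathbb{R})$. Condition~(2) says $B=[0,1)$, so $\mathbb{R}=A\dot{+}[0,1)$, and condition~(3) says $0=f(0)\in A$. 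Hence everything reduces to the set-theoretic claim that the only $A\subseteq\mathbb{R}$ with $\mathbb{R}=A\dot{+}[0,1)$ and $0\in A$ is $A=\mathbb{Z}$: once this is known, uniqueness of the representation $x=a+b$ with $a\in\mathbb{Z}$, $b\in[0,1)$ forces $f=P_{\mathbb{Z}}=\lfloor\;\rfloor$, the projection attached to $\mathbb{R}=\mathbb{Z}\dot{+}[0,1)$. (That $\lfloor\;\rfloor$ itself satisfies (1)--(3) is immediate, since $\lfloor\{x\}+n\rfloor=n$ for every $n\in\mathbb{Z}$.)

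The core of the argument is a local lemma: if $\mathbb{R}=A\dot{+}[0,1)$ and $c\in A$, then $c+1\in A$, $c-1\in A$ and $A\cap(c-1,c+1)=\{c\}$. Translating the whole factorization by $-c$ (which preserves all the equivalent conditions of the preliminaries, and sends $c$ to $0$) we may assume $c=0$. The factorization means $\mathbb{R}=\dot{\bigcup}_{a\in A}(a+[0,1))$, so the sets $[a,a+1)$ with $a\in A$ are pairwise disjoint; comparing each with $[0,1)$ gives $[a,a+1)\cap[0,1)=\emptyset$ for $a\neq0$, hence $A\cap(-1,1)=\{0\}$, and the same disjointness shows that distinct points of $A$ lie at distance $\geq 1$. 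Writing $1=a+b$ with $a\in A$, $b\in[0,1)$ gives $a\in(0,1]$, so necessarily $a=1$; writing $-1=a'+b'$ gives $a'\in(-2,-1]$, and if $a'<-1$ then the nonempty interval $[a'+1,0)$ would have to be covered by some $[c',c'+1)$ with $c'\in A$ and $a'<c'\leq-1$, producing a second point of $A$ in $(-2,-1]$ and contradicting the $1$-separation. Hence $a'=-1$, which proves the lemma.

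Finally, starting from $0\in A$ and applying the lemma successively at $0,\pm1,\pm2,\dots$ yields $\mathbb{Z}\subseteq A$ together with $A\cap(n-1,n+1)=\{n\}$ for every $n\in\mathbb{Z}$; since $\mathbb{R}=\bigcup_{n\in\mathbb{Z}}(n-1,n+1)$, every point of $A$ equals some integer, so $A=\mathbb{Z}$. I expect the only delicate point to be the negative half of the lemma, namely $c-1\in A$: the inclusion $c+1\in A$ drops out of the $1$-separation alone, but excluding a point of $A$ strictly between $c-1$ and $c$ genuinely needs the covering (surjectivity) half of the factorization, via the gap argument above. The remaining steps are either bookkeeping or are quoted directly from the preliminaries.
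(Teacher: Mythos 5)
Your proposal is correct and follows essentially the same route as the paper: both reduce conditions (1)--(3) to the factorization $\mathbb{R}=f(\mathbb{R})\dot{+}[0,1)$ with $0\in f(\mathbb{R})$, prove $f(\mathbb{R})=\mathbb{Z}$ by the disjointness of the unit intervals $[a,a+1)$ plus a covering argument for the step $c-1\in A$ (your gap-point argument versus the paper's midpoint), and then read off $f=\lfloor\;\rfloor$ from uniqueness of the representation. The only cosmetic difference is that you invoke the quoted decomposer/parting-projection equivalence from the preliminaries to get directness of the sum, while the paper re-derives the needed disjointness inline from (1) and (2).
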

 \begin{proof}
 It is easy to see that the floor function satisfies the conditions. Conversely,
 let the conditions hold and put $g(x):=x-f(x)$. Thus $g(\mathbb{R})=[0,1)$,
 and putting $A:=f(\mathbb{R})$ we have $$\mathbb{R}=A+[0,1)=\bigcup_{a\in A}[a,a+1).$$
 Our claim is $A=\mathbb{Z}$. Let $k\in A\cap \mathbb{Z}$ (e.g., $k=0$),
 then $k+1\in [a_1,a_1+1)$, for some $a_1\in A$, and hence
 $[a_1,a_1+1)\cap [k,k+1)=\emptyset$. Indeed, if the intersection contains an element $t$,
 then  $t=a_1+d_1=k+d_2$, for some $d_1,d_2\in [0,1)$, and so (1), (2) imply that
 $$
 a_1=f(a_1+d_1)=f(k+d_2)=k,
 $$
which contradicts $k+1\in [a_1,a_1+1)$. Therefore, $k+1\leq a_1$ and $a_1\leq k+1<a_1+1$
 (note that the case $a_1+1\leq k$ is impossible, since $k<k+1<a_1+1$) thus $k+1\in a_1\in A$.
Also, $k-1\in [a_2,a_2+1)$, for some $a_2\in A$, thus $a_2\leq k-1$.
If $a_2< k-1$ then putting $M=\frac{k+a_2+1}{2}$ we have $M\in [a_3,a_3+1)$, for some $a_3\in A$,
and $0<k-(a_2+1)$ which implies the interval $[a_3,a_3+1)$ intersects either $[a_2,a_2+1)$
or  $[k,k+1)$ and this makes a contradiction similar to the above. Hence $k-1\in a_2\in A$.
Up to now, we have proven that $\mathbb{Z}\subseteq A$. Also, $A$ can not have any non-integer element
(because every interval $[a,a+1)$ intersects at least an interval  $[k,k+1)$, for some $k\in \mathbb{Z}$).
Therefore, the claim is proved and so $f(x)\in \mathbb{Z}$, for all $x\in \mathbb{R}$, and
$$
f(x)=f(x)+\lfloor g(x)\rfloor=\lfloor f(x)+ g(x)\rfloor=\lfloor x\rfloor.
$$
This completes the proof.
 \end{proof}
\begin{rem}
With due regard to the conditions of the above theorem, it is clear that the third
condition can be reduced to $f(k_0)=k_0$ for some $k_0\in \mathbb{Z}$  or even $f(x_0)\in \mathbb{Z}$  for some
$x_0\in \mathbb{R}$ (i.e., $f(\mathbb{R})\cap \mathbb{Z}\neq\emptyset)$.
But if $f(\mathbb{R})\cap \mathbb{Z}=\emptyset$, then it is no longer valid.
For if $f(x):=\lfloor x-\frac{1}{2}\rfloor+\frac{1}{2}$, then $f$ satisfies
the conditions (1) and (2) but not (3).
To say more generally, by removing each of the conditions (1), (2), or  $f(\mathbb{R})\cap \mathbb{Z}\neq\emptyset$, the uniqueness condition fails.
\end{rem}
As mentioned before, all parting projections make general solution of $(1.1)$.
Now, we obtain the general solution of $(1.1)$ under the condition $f^*(\mathbb{R})=[0,1)$
(i.e., all functions satisfy the conditions (1) and (2) of Theorem 2.1).
 \begin{theorem}
 The general solution of the equation $(1.1)$ with the condition $f^*(\mathbb{R})=[0,1)$ is
  $f(x)=\lfloor x-c\rfloor+c$ for all $($constant$)$ real numbers $c$.
 \end{theorem}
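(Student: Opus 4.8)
The plan is to reduce Theorem 2.2 to Theorem 2.1 by translating along a fixed point, after first recording that every function of the asserted form is indeed a solution. For the easy inclusion I would fix $c\in\mathbb{R}$ and set $f_c(x):=\lfloor x-c\rfloor+c$. Then $f_c^*(x)=x-f_c(x)=(x-c)-\lfloor x-c\rfloor$, so $f_c^*(\mathbb{R})=[0,1)$; moreover $f_c(\mathbb{R})=c+\mathbb{Z}$, and the factorization $\mathbb{R}=(c+\mathbb{Z})\,\dot{+}\,[0,1)$ exhibits $f_c$ as the parting projection onto $c+\mathbb{Z}$, hence $f_c$ satisfies the decomposer equation $(1.1)$ by the characterization of parting projections recalled in Section 1. (Alternatively, $(1.1)$ can be checked directly from $f_c(x+k)=f_c(x)+k$ for $k\in\mathbb{Z}$ together with $f_c^*(\mathbb{R})\subseteq[0,1)$.)

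For the converse, let $f$ satisfy $(1.1)$ and $f^*(\mathbb{R})=[0,1)$. The first step is to find a fixed point of $f$: since $0\in[0,1)=f^*(\mathbb{R})$, there is $c\in\mathbb{R}$ with $f^*(c)=0$, i.e.\ $f(c)=c$. The main step is then to normalize by this shift: define $h:\mathbb{R}\to\mathbb{R}$ by $h(x):=f(x+c)-c$, so that $h^*(x)=x-h(x)=(x+c)-f(x+c)=f^*(x+c)$. I claim $h$ satisfies the three hypotheses of Theorem 2.1. Condition $(3)$ is immediate: $h(0)=f(c)-c=0$. Condition $(2)$ holds because $h^*(\mathbb{R})=f^*(c+\mathbb{R})=f^*(\mathbb{R})=[0,1)$. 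For condition $(1)$, a direct substitution using $h^*(x)=f^*(x+c)$ and then $(1.1)$ for $f$ at the points $x+c$ and $y+c$ gives, for all $x,y\in\mathbb{R}$,
\[
h\big(x-h(x)+h(y)\big)=f\big(h^*(x)+h(y)+c\big)-c=f\big(f^*(x+c)+f(y+c)\big)-c=f(y+c)-c=h(y).
\]

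By Theorem 2.1 it follows that $h=\lfloor\;\rfloor$, that is $f(x+c)-c=\lfloor x\rfloor$ for every $x\in\mathbb{R}$; replacing $x$ by $x-c$ yields $f(x)=\lfloor x-c\rfloor+c$, and the proof is complete. I do not anticipate a genuine obstacle: the only points needing attention are the existence of the shift $c$ — which is guaranteed exactly because $0$ belongs to the prescribed range $f^*(\mathbb{R})=[0,1)$ — and the routine verification that the translate $h$ inherits condition $(1)$ of Theorem 2.1. If one preferred to avoid invoking Theorem 2.1, one could instead mimic its proof: writing $A:=f(\mathbb{R})$ and using that $(1.1)$ makes $\mathbb{R}=A\,\dot{+}\,[0,1)=\dot{\bigcup}_{a\in A}[a,a+1)$, one shows first that $A$ is closed under $a\mapsto a\pm1$ and then, since $0\in[0,1)$ forces the directness to pin down $A$, that $A=c+\mathbb{Z}$ for any $c\in A$; but the reduction above is shorter.
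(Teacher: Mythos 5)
Your proposal is correct, but it proves the theorem by a different route than the paper. You reduce the statement to Theorem 2.1: since $0\in f^*(\mathbb{R})=[0,1)$ you extract a fixed point $c$ with $f(c)=c$, translate to $h(x)=f(x+c)-c$, check that $h$ inherits (1.1) (your computation $h(h^*(x)+h(y))=f(f^*(x+c)+f(y+c))-c=h(y)$ is right), has $h^*(\mathbb{R})=[0,1)$ and $h(0)=0$, and then invoke the uniqueness of the floor function. The paper instead gives a self-contained structural argument: it sets $A:=f(\mathbb{R})$, uses $\mathbb{R}=\dot{\bigcup}_{a\in A}[a,a+1)$ to show that the restriction $\lceil\;\rceil\arrowvert_A:A\rightarrow\mathbb{Z}$ is a strictly increasing bijection whose inverse $\lambda$ satisfies $\lambda(i+1)=\lambda(i)+1$, concludes $A=c+\mathbb{Z}$ with $c=\lambda(0)$, and then identifies $f(x)$ on each $[c+i,c+i+1)$ using $x-1<f(x)\leq x$. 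Your reduction is shorter and legitimately reuses Theorem 2.1, whose proof in the paper is independent of this theorem, so there is no circularity; the only thing to keep in mind is that the paper's remark immediately after this theorem (that Theorem 2.1 can conversely be deduced from it) would become circular if one adopted your proof and then still wanted that remark as an alternative derivation of Theorem 2.1. What the paper's direct approach buys is an explicit description of the factor $A$ (the mechanism behind Corollary 2.6 and the later interval-factor theorem), while your approach buys economy by concentrating all the hard work in the already-proved floor characterization; the fixed-point-plus-translation trick is also in the spirit of the paper's own proof of Corollary 2.7, where $F(x)=f(x)-f(0)$ is fed into Theorem 2.1.
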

 \begin{proof}
It is easy to check that all functions $f$ with this form fulfill the conditions.
Conversely, let $f$ be a solution of $(1.1)$ with the mentioned condition, and $A:=f(\mathbb{R})$.
We claim that $A=c+\mathbb{Z}$, for some $c\in \mathbb{R}$. The restricted function
$\lceil\;\rceil\arrowvert_A:A\rightarrow \mathbb{Z}$
(restriction of the ceiling function on $A$) is a strictly increasing onto function.
Because, $\mathbb{R}=\bigcup_{a\in A}[a,a+1)$ and $i\in [a,a+1)\cap \mathbb{Z}$ if and only if
$i=\lceil a\rceil$. Also, if $a,a'\in A$ and $a<a'$, then we consider two cases:\\
$a+1<a'$: we have $\lceil a\rceil<a+1<a'\leq \lceil a'\rceil$ thus $\lceil a\rceil<\lceil a'\rceil$.\\
$a'\leq a+1$: we conclude that $a'\in (a,a+1]$ and $[a,a+1)\cap [a',a'+1)=\emptyset$ requires that
$a'=a+1$. Hence $\lceil a\rceil<\lceil a+1\rceil=\lceil a'\rceil$.\\
Therefore, it is an invertible function with the inverse $\lambda: \mathbb{Z}\rightarrow A$.
We claim that $\lambda(i+1)=\lambda(i)+1$ for all $i\in \mathbb{Z}$. Putting $a_i:=\lambda(i)$
we have
$$
a_i\leq \lceil a_i\rceil=i<i+1=\lceil a_{i+1}\rceil< a_{i+1}+1\; ;\; i\in \mathbb{Z}.
$$
Also, $[a_i,a_i+1)\cap [a_{i+1},a_{i+1}+1)=\emptyset$ since $a_i$ and $a_{i+1}$ are distinct elements of $A$. Hence
$a_i+1\leq a_{i+1}$ ($a_{i+1}+1\leq a_i$ is impossible by the above relation) and $\lambda$ is a strictly increasing function (two-sided sequence).
Thus, these facts establish the claim, since all intervals of the form $[a_i,a_i+1)$ cover $\mathbb{R}$ and there is no such interval
between $[a_i,a_i+1)$ and $[a_{i+1},a_{i+1}+1)$.\\
Therefore, putting $c:=a_0$ we obtain $a_i=c+i$, for all integers $i$. Now, for every $x\in \mathbb{R}$, we have
$x\in [a_i,a_i+1)$ for some $i\in \mathbb{Z}$. We claim that $f(x)=a_i$ or equivalently $f(x)\in [a_i,a_i+1)$.
 Because $x-1<f(x)\leq x$, due to the condition,
and so $$a_{i-1}=a_i-1\leq x-1<f(x)\leq x<a_i+1=a_{i+1}.$$
 Thus $a_{i-1}<f(x)<a_{i+1}$ and so $f(x)=a_{i}$. This means
$f(x)=c+i$,  for all $x$ such that $c+i\leq x<c+i+1$, and hence  $f(x)=c+\lfloor x-c\rfloor$.
 \end{proof}
Note that one can obtain the uniqueness conditions for the characterization of the floor function
(Theorem 2.1) from Theorem 2.3, since the range of $f(x)=\lfloor x-c\rfloor+c$ intersects $\mathbb{Z}$ if and only if $c\in \mathbb{Z}$.
\begin{cor}[A characterization of the ceiling function]
The ceiling function  $f=\lceil\;\rceil :\mathbb{R}\rightarrow \mathbb{R}$ is the only function
 satisfying $(1.1)$ with the conditions:\\
 $($i$)$ $f^*(\mathbb{R})=(-1,0]$;\\
 $($ii$)$ $f(\mathbb{R})\cap \mathbb{Z}\neq\emptyset$.\\
 Moreover, all functions satisfying $(1.1)$ and (i) are of the form $f(x)=\lceil x-c\rceil+c$ for all $c\in \mathbb{R}$.
 \end{cor}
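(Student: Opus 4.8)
The plan is to reduce everything to Theorem 2.3 by a reflection, exploiting the elementary identity $\lceil t\rceil=-\lfloor -t\rfloor$. Given any $f:\mathbb{R}\to\mathbb{R}$ satisfying $(1.1)$ together with condition (i), i.e. $f^*(\mathbb{R})=(-1,0]$, I would set $g:\mathbb{R}\to\mathbb{R}$, $g(x):=-f(-x)$, and first check that $g$ again solves $(1.1)$ and has $g^*(\mathbb{R})=[0,1)$, so that Theorem 2.3 applies to $g$. A short computation gives $g^*(x)=x-g(x)=x+f(-x)=-\big((-x)-f(-x)\big)=-f^*(-x)$, hence $g^*(\mathbb{R})=-f^*(\mathbb{R})=-(-1,0]=[0,1)$. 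For the decomposer equation, using $-g^*(x)=f^*(-x)$ and $-g(y)=f(-y)$ one obtains
\[
g\big(g^*(x)+g(y)\big)=-f\big(-g^*(x)-g(y)\big)=-f\big(f^*(-x)+f(-y)\big)=-f(-y)=g(y),
\]
where the third equality is $(1.1)$ for $f$ evaluated at $-x,-y$. So $g$ satisfies the hypotheses of Theorem 2.3.

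By Theorem 2.3 we then have $g(x)=\lfloor x-c'\rfloor+c'$ for some constant $c'\in\mathbb{R}$, and therefore $f(x)=-g(-x)=-\lfloor -x-c'\rfloor-c'=\lceil x+c'\rceil-c'$; putting $c:=-c'$ yields $f(x)=\lceil x-c\rceil+c$. Conversely, every function of this form satisfies $(1.1)$ and (i), which can be checked directly exactly as in Theorems 2.1 and 2.3 (or, again, deduced by reflecting the solutions produced there). This gives the ``moreover'' part of the corollary.

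Finally, for the uniqueness claim I would note that the range of $f(x)=\lceil x-c\rceil+c$ is precisely $c+\mathbb{Z}$, which meets $\mathbb{Z}$ exactly when $c\in\mathbb{Z}$, and in that case $\lceil x-c\rceil+c=\lceil x\rceil$ for all $x$. Hence condition (ii) forces $c\in\mathbb{Z}$, so $f=\lceil\;\rceil$, while $\lceil\;\rceil$ itself plainly satisfies $(1.1)$, (i) and (ii). I do not anticipate a genuine obstacle here; the only point requiring care is the sign bookkeeping for $f^*$ under the reflection $x\mapsto-x$ and the correct use of $\lceil t\rceil=-\lfloor -t\rfloor$. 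As an alternative one could simply repeat the argument of Theorem 2.3 verbatim with the floor function in place of the ceiling function and the intervals $[a,a+1)$ replaced by $(a-1,a]$ throughout; the reflection argument merely packages this symmetry cleanly.
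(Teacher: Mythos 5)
Your proposal is correct and matches the paper's own argument, which likewise reduces the statement to Theorems 2.1/2.3 via the reflection $h(x):=-f(-x)$ and the identity $\lceil t\rceil=-\lfloor -t\rfloor$. You simply spell out the sign bookkeeping that the paper leaves implicit; nothing is missing.
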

 \begin{proof}
This is a direct result of Theorem 2.1, 2.3 and Remark 2.2. Indeed, if $f$
satisfies the above conditions, then $h(x):=-f(-x)$ fulfill all the conditions of Theorem 2.1 (considering $h$ instead of $f$)
and one can get the result.
\end{proof}
\begin{cor}[A characterization of the fractional part function]
The fractional part function  $f=\{\;\} :\mathbb{R}\rightarrow \mathbb{R}$ is the only function
 satisfying $(1.1)$ with the conditions:\\
 $($i$)$ $f(\mathbb{R})=[0,1)$;\\
$($ii$)$ $f^*(\mathbb{R})\cap \mathbb{Z}\neq\emptyset$.\\
 Moreover, all functions satisfying $(1.1)$ and (i) are of the form $f(x)=\{ x-c\}$ for all (constant) real numbers $c$.
 \end{cor}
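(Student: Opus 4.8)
The plan is to reduce everything to Theorem 2.3 by exploiting the $*$-duality of parting projections recalled in Section 1. First I would record the following: if $f$ satisfies $(1.1)$, then $f$ is a parting projection, say $f=P_B$ for a factorization $\mathbb{R}=A\dot{+}B$ with $A=f^*(\mathbb{R})$ and $B=f(\mathbb{R})$; since $P_A^*=P_B$, the function $f^*=P_A$ is again a parting projection and hence also a solution of $(1.1)$. Moreover $(f^*)^*=\iota_\mathbb{R}-f^*=f$, so passing from $f$ to $f^*$ simply interchanges the roles of the range and the $*$-range. In particular, if $f$ satisfies $(1.1)$ together with (i), i.e. $f(\mathbb{R})=[0,1)$, then $g:=f^*$ satisfies $(1.1)$ together with $g^*(\mathbb{R})=f(\mathbb{R})=[0,1)$, which is exactly the hypothesis of Theorem 2.3.

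Applying Theorem 2.3 to $g$, I obtain a constant $c\in\mathbb{R}$ with $g(x)=\lfloor x-c\rfloor+c$, and therefore $f=\iota_\mathbb{R}-g$ gives $f(x)=(x-c)-\lfloor x-c\rfloor=\{x-c\}$; this proves one inclusion of the \emph{moreover} statement. For the reverse inclusion I would check directly that each $f(x)=\{x-c\}$ satisfies $(1.1)$ and (i): its range is $[0,1)$, and $x-\{x-c\}=c+\lfloor x-c\rfloor\in c+\mathbb{Z}$, so $f=P_{[0,1)}$ for the factorization $\mathbb{R}=(c+\mathbb{Z})\dot{+}[0,1)$ and hence is a parting projection, i.e. a solution of $(1.1)$.

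Finally, for the uniqueness claim I would impose condition (ii). With $f(x)=\{x-c\}$ we have $f^*(x)=c+\lfloor x-c\rfloor$, whose range is $c+\mathbb{Z}$; this meets $\mathbb{Z}$ if and only if $c\in\mathbb{Z}$, and in that case $\{x-c\}=\{x\}$. Hence (ii) forces $f=\{\;\}$. The converse direction is immediate, since $\{\;\}^*=\lfloor\;\rfloor$ has range $\mathbb{Z}$, so $\{\;\}$ indeed satisfies $(1.1)$, (i) and (ii).

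\textbf{Expected main obstacle.} There is essentially no hard computation once Theorem 2.3 is in hand; the only point that needs care is the passage from $f$ to $f^*$ — namely that $f^*$ is again a solution of $(1.1)$ and that $(f^*)^*=f$ swaps ``range'' and ``$*$-range'' — which is precisely the identity $P_A^*=P_B$. Alternatively, and perhaps more self-containedly, one could mimic the proof of Theorem 2.3 directly: starting from $\mathbb{R}=A\dot{+}[0,1)$ with $A=f^*(\mathbb{R})$, run the same covering-by-translates-of-$[0,1)$ argument to show $A=c+\mathbb{Z}$ and then read off $f(x)=\{x-c\}$. I expect the $f^*$-reduction to be the shorter route, so that is the one I would present.
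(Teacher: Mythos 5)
Your proposal is correct and follows essentially the same route as the paper: the paper's proof likewise reduces to the preceding results by replacing $f$ with $g:=f^*$ (using $g^*=(f^*)^*=f$) and applying Theorem 2.3, then using condition (ii) to pin down $c\in\mathbb{Z}$. Your write-up merely spells out the justification that $f^*$ is again a solution of $(1.1)$, which the paper leaves implicit.
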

 \begin{proof}
This is also a direct corollary of the above results,
considering $g(x):=f^*(x)=x-f(x)$ instead of $f$ (note that $g^*=(f^*)^*=f$).
\end{proof}
\begin{cor}
For $A\subseteq \mathbb{R}$, the following conditions are equivalent:\\
$($a$)$ $\mathbb{R}=A\dot{+}[0,1)$;\\
$($b$)$ $A=c+\mathbb{Z}$, for some $c\in \mathbb{R}$;\\
$($c$)$ $\mathbb{R}=\mathbb{Z}\dot{+}[a_0,a_0+1)$, for some $a_0\in A$, and $A=a_0+\mathbb{Z}$.\\
Hence, $\mathbb{R}=A\dot{+}[0,1)$ and $A\cap \mathbb{Z}\neq\emptyset$ if and only if  $A=\mathbb{Z}$.
 \end{cor}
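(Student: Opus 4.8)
The plan is to deduce the whole corollary from Theorem 2.3 by translating between the ``factor'' vocabulary and the ``parting projection'' vocabulary recalled in Section 1. I would prove the cycle $(a)\Rightarrow(b)\Rightarrow(c)\Rightarrow(a)$ and then read off the concluding sentence.

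For $(a)\Rightarrow(b)$: starting from $\mathbb{R}=A\dot{+}[0,1)$, I would invoke the facts from Section 1 that this factorization induces the projection $P_{A}$ with $P_{A}(\mathbb{R})=A$, that every parting projection solves the decomposer equation $(1.1)$, and that $P_{A}^{*}=P_{[0,1)}$, so $P_{A}^{*}(\mathbb{R})=[0,1)$. Thus $f:=P_{A}$ is exactly the kind of function classified by Theorem 2.3, which forces $f(x)=\lfloor x-c\rfloor+c$ for some constant $c\in\mathbb{R}$, and hence $A=f(\mathbb{R})=c+\mathbb{Z}$. This is the only implication with genuine content, and that content has already been absorbed into Theorem 2.3 (the interval-covering argument showing that a partition of $\mathbb{R}$ into half-open unit intervals must be a translate of the standard one).

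For $(b)\Rightarrow(c)$ and $(c)\Rightarrow(a)$: these are routine verifications. Given $A=c+\mathbb{Z}$ I would take $a_{0}:=c\in A$, note $a_{0}+\mathbb{Z}=A$, and check $\mathbb{R}=\mathbb{Z}\dot{+}[a_{0},a_{0}+1)$ by writing $x=\lfloor x-c\rfloor+(x-\lfloor x-c\rfloor)$ with $x-\lfloor x-c\rfloor=c+\{x-c\}\in[a_{0},a_{0}+1)$, uniqueness coming from $\mathbb{Z}\cap(-1,1)=\{0\}$. Conversely, from $A=a_{0}+\mathbb{Z}$ the sum $A+[0,1)$ is just the translate by $a_{0}$ of $\mathbb{Z}+[0,1)=\mathbb{R}$, and directness is immediate from the equivalence $(A-A)\cap([0,1)-[0,1))=\{0\}$ listed among the four equivalent conditions in Section 1, since $A-A=\mathbb{Z}$ and $[0,1)-[0,1)=(-1,1)$.

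For the final ``Hence'' assertion: once $(a)\Leftrightarrow(b)$ is established, $A=c+\mathbb{Z}$ meets $\mathbb{Z}$ exactly when $c\in\mathbb{Z}$, i.e.\ exactly when $A=\mathbb{Z}$; the reverse implication is simply the basic factorization $\mathbb{R}=\mathbb{Z}\dot{+}[0,1)$ together with $\mathbb{Z}\cap\mathbb{Z}\neq\emptyset$. I do not expect any obstacle here: essentially all the work is already done, and the task is to package Theorem 2.3 with a couple of elementary directness checks, the only point requiring care being the bookkeeping that identifies $P_{A}$ as a solution of $(1.1)$ with $\ast$-range $[0,1)$ so that Theorem 2.3 applies verbatim.
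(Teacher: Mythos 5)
Your proposal is correct and follows essentially the same route as the paper: the only substantive implication (a)$\Rightarrow$(b) is obtained, just as in the paper, by identifying $P_A$ as a decomposer function with $*$-range $[0,1)$ and invoking Theorem 2.3, while (b)$\Rightarrow$(c)$\Rightarrow$(a) and the final assertion are the same routine translation checks the paper indicates. No gaps; your verification of directness via $(A-A)\cap(-1,1)=\{0\}$ just makes explicit what the paper leaves implicit.
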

 \begin{proof}
Note that $\mathbb{R}=A\dot{+}[0,1)$ if and only if $A$ is the range of a decomposer function $f$
with $f^*(\mathbb{R})=[0,1)$. Also, (b) implies $a_0:=c\in A$ and $\mathbb{R}=A\dot{+}[0,1)=
(a_0+\mathbb{Z})\dot{+}[0,1)=\mathbb{Z}\dot{+}[a_0,a_0+1)$.
\end{proof}
The last equivalent condition of the above corollary induces another class of solutions of $(1.1)$.
\begin{cor}
 The general solution of the equation $(1.1)$ with the condition $f^*(\mathbb{R})=[-f(0),-f(0)+1)$ is
  $f(x)=\lfloor x\rfloor+c$ for all (constant) real numbers $c$.
 \end{cor}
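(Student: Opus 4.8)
The plan is to deduce the statement from Theorem 2.1 by a vertical shift, exploiting the fact that the prescribed $*$-range $[-f(0),-f(0)+1)$ is just the translate of $[0,1)$ by the constant $-f(0)$. First I would check the easy direction: if $f(x)=\lfloor x\rfloor+c$, then $f^*(x)=x-f(x)=\{x\}-c$, so $f^*(\mathbb{R})=[0,1)-c=[-c,-c+1)$; since $f(0)=c$, this reads $f^*(\mathbb{R})=[-f(0),-f(0)+1)$, and $f$ satisfies $(1.1)$ because it is the parting projection onto $c+\mathbb{Z}$ for the factorization $\mathbb{R}=(c+\mathbb{Z})\dot{+}[-c,-c+1)$ (equivalently, a one-line substitution: $f(f^*(x)+f(y))=\lfloor\{x\}-c+\lfloor y\rfloor+c\rfloor+c=\lfloor y\rfloor+c=f(y)$).

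For the converse, let $f$ solve $(1.1)$ with $f^*(\mathbb{R})=[-f(0),-f(0)+1)$, put $c:=f(0)$, and set $h:=f-c$, i.e. $h(x)=f(x)-c$. The crucial observation is that $h^*(x)=x-h(x)=\bigl(x-f(x)\bigr)+c=f^*(x)+c$, whence $h^*(\mathbb{R})=f^*(\mathbb{R})+c=[-c,-c+1)+c=[0,1)$ and $h(0)=f(0)-c=0$. Moreover $h$ is again a solution of $(1.1)$, and in fact this holds for any vertical translate of a solution: since $h^*(x)+h(y)=\bigl(f^*(x)+c\bigr)+\bigl(f(y)-c\bigr)=f^*(x)+f(y)$, we get $h\bigl(h^*(x)+h(y)\bigr)=f\bigl(f^*(x)+f(y)\bigr)-c=f(y)-c=h(y)$. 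Thus $h$ satisfies conditions (1), (2), (3) of Theorem 2.1 verbatim, so $h=\lfloor\;\rfloor$, and therefore $f(x)=\lfloor x\rfloor+c$, which finishes the proof.

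I do not expect a genuine obstacle here: the corollary is a translation argument, and the single point one must notice is that $*$-ranges transform additively under vertical shifts of $f$ --- the identity $h^*=f^*+c$ --- after which the conclusion is forced by Theorem 2.1. If one prefers to avoid introducing $h$, one can instead work with ranges directly: $(1.1)$ gives $\mathbb{R}=A\dot{+}[-c,-c+1)$ where $A:=f(\mathbb{R})$, hence $\mathbb{R}=(A-c)\dot{+}[0,1)$; since $c=f(0)\in A$ we have $0\in(A-c)\cap\mathbb{Z}$, so Corollary 2.6 forces $A-c=\mathbb{Z}$, i.e. $A=c+\mathbb{Z}$, and identifying $f$ with the projection onto $c+\mathbb{Z}$ along $[-c,-c+1)$ again yields $f(x)=\lfloor x\rfloor+c$.
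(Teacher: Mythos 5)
Your proposal is correct and follows essentially the same route as the paper, which also sets $F(x):=f(x)-f(0)$, checks that $F$ satisfies conditions (1), (2), (3) of Theorem 2.1, and concludes with $c=f(0)$; your write-up merely spells out the verification (in particular the identity $h^*=f^*+c$) that the paper leaves implicit. The alternative argument you sketch via Corollary 2.6 is also fine, but the main proof matches the paper's.
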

 \begin{proof}
All functions of the form $f(x)=\lfloor x\rfloor+c$ satisfy the conditions. Conversely if
the conditions hold, then the function $F(x):=f(x)-f(0)$ satisfies (1), (2) and (3) of Theorem 2.1,
and putting $c=f(0)$ we obtain the result.
\end{proof}
\begin{rem}
It may make the reader wonder of what form the answers will be if we consider one of the conditions
 $f^*(\mathbb{R})=\mathbb{Z}$ or $f(\mathbb{R})=\mathbb{Z}$ for $(1.1)$, and whether the conditions give those special functions?\\
By Theorem 3.6 of \cite{MH3}, under the condition $f^*(\mathbb{R})=\mathbb{Z}$ (resp. $f(\mathbb{R})=\mathbb{Z}$)
 equation $(1.2)$ (resp. $(1.1)$) is equivalent to $f(x+1)=f(x)$ (resp. $f(x+1)=f(x)+1$) which means $f$ is 1-periodic
 (resp. 1-coperiodic). Hence, by using the characterization of periodic (resp. coperiodic)  functions in groups in the same paper
 (Theorem 4.1 and Remark 4.2 of \cite{MH3}), such functions would be of the form $f(x)=\mu(\{x\})$ (resp. $f(x)=\lfloor x\rfloor+\mu(\{x\})$),
 for all functions $\mu:[0,1)\rightarrow \mathbb{R}$. Therefore, we have uncountably many solutions for $(1.1)$
 under the condition  $f^*(\mathbb{R})=\mathbb{Z}$ or $f(\mathbb{R})=\mathbb{Z}$. It is worth noting that for the co-periodic case,
 the ceiling function $\lceil x\rceil$ is also obtained from the general from by putting
 \begin{equation*}
\mu(x) =\left\{\begin{array}{l}
0  \quad : \quad  \mbox{if } x=0 \\
1  \quad : \quad \mbox{if } 0<x<1
\end{array} \right.
\end{equation*}
\end{rem}

\section{Related topics and results}
Since decomposer functions have close relationships to the factorization of groups by two subsets, the achievements in the
previous section (for the real case) can have several related results. Also, one can extend them for many other special functions.
\\
Since $\mathbb{R}=A\dot{+} B$ if and only if $\mathbb{R}=(\lambda A)\dot{+}(\lambda B)$,
for every $\lambda\neq 0$, one may think that $P_{\lambda A}=\lambda P_{A}$. But,
an observation shows that $P_{\lambda A}(x)=\lambda P_{A}(\frac{x}{\lambda})$,
for all $x$. Because
$$P_{\lambda A}(x)+P_{\lambda B}(x)=x=\lambda P_{A}(\frac{x}{\lambda})=\lambda P_{B}(\frac{x}{\lambda}),$$
and $P_{\lambda A}(x),\lambda P_{A}(\frac{x}{\lambda})\in \lambda A$.
Hence, we arrive at the next useful lemma.
\begin{lem} Let $a,b,c,d,e$ and $\lambda$ be arbitrary constant real numbers with $\lambda b\neq 0$, and $A,B$ two subsets.\\
$($a$)$ If $\mathbb{R}=A\dot{+} B$, then $\mathbb{R}=(\lambda A+d)\dot{+}(\lambda B+e)$, and vice versa. Moreover,
$P_{\lambda A+d}(x)=\lambda P_{A}(\frac{x}{\lambda})$ and $P_{\lambda B+e}(x)=\lambda P_{B}(\frac{x}{\lambda})$,
for all $x$.
\\
$($b$)$ If $f$ is a real decomposer function, then all functions of the form
$$bf(\frac{x}{b}+a)+c\; , \; x-bf(\frac{x}{b}+a)+c$$
are decomposer.
In particular, the functions $f^*(x)$, $-f(-x)$, $f(x+a)$, $f(x)+c$, and $f_b(x):=bf(\frac{x}{b})$
are decomposer functions.
\end{lem}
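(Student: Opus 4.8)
\medskip
\noindent\textbf{Proof proposal.}
The plan is to route everything through the two facts recorded in Section~1: that $\mathbb{R}=A\dot{+}B$ is equivalent to $\mathbb{R}=A+B$ together with $(A-A)\cap(B-B)=\{0\}$, and that a real function is a decomposer function exactly when it is a parting projection $P_{A}$ for some factorization $\mathbb{R}=A\dot{+}B$, in which case $P_{A}^{*}=P_{B}$. With these in hand, part~(a) becomes a statement about affine images of factorizations, and part~(b) is obtained by transcribing it into the language of projections.

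For~(a), I would first check that an affine change of the summands preserves directness and surjectivity. Since $\lambda\neq 0$, $\lambda A+\lambda B=\lambda(A+B)=\lambda\mathbb{R}=\mathbb{R}$, hence $(\lambda A+d)+(\lambda B+e)=\mathbb{R}+(d+e)=\mathbb{R}$; and since $(\lambda A+d)-(\lambda A+d)=\lambda(A-A)$ and likewise for $B$, the difference-set criterion gives $\bigl((\lambda A+d)-(\lambda A+d)\bigr)\cap\bigl((\lambda B+e)-(\lambda B+e)\bigr)=\lambda\bigl((A-A)\cap(B-B)\bigr)=\{0\}$. Thus $\mathbb{R}=(\lambda A+d)\dot{+}(\lambda B+e)$, and running the same transformation with $\lambda^{-1},-d/\lambda,-e/\lambda$ gives the converse. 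The projection formulas then drop out of uniqueness of the direct representation: given $x$, decompose the single number $\lambda^{-1}(x-d-e)$ through $\mathbb{R}=A\dot{+}B$, scale back by $\lambda$, and attach the constants $d$ and $e$ to the two pieces; the element of $\lambda A+d$ so produced must equal $P_{\lambda A+d}(x)$, and similarly for $B$.

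For~(b), write $f=P_{A}$ with $\mathbb{R}=A\dot{+}B$ and $f^{*}=P_{B}$, and put $g(x):=bf(\frac{x}{b}+a)+c$. As $b\neq 0$, the substitution $y:=\frac{x}{b}+a$ is a bijection of $\mathbb{R}$ with $x=b(y-a)$, so $g(x)=bP_{A}(y)+c\in bA+c$ and $x-g(x)=b(y-a)-bP_{A}(y)-c=bP_{B}(y)-ab-c\in bB-ab-c$. By~(a), $\mathbb{R}=(bA+c)\dot{+}(bB-ab-c)$, whence $x=g(x)+(x-g(x))$ is exactly the unique decomposition of $x$ for this factorization; therefore $g=P_{bA+c}$ is a parting projection and so a decomposer function. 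For the second family $x-bf(\frac{x}{b}+a)+c$, apply this to $g_{0}(x):=bf(\frac{x}{b}+a)-c$ and take $*$: $g_{0}^{*}(x)=x-g_{0}(x)=x-bf(\frac{x}{b}+a)+c$, which is decomposer because $*$ turns the parting projection $P_{A'}$ into the parting projection $P_{B'}$. The listed special cases then correspond to $(b,a,c)=(1,0,0)$ in the second family ($f^{*}$), $(-1,0,0)$ in the first family ($-f(-x)$), $(1,a,0)$ ($f(x+a)$), $(1,0,c)$ ($f(x)+c$), and $(b,0,0)$ ($f_{b}$).

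The step that I expect to need the most care is the bookkeeping in~(b): after the change of variable $y=\frac{x}{b}+a$ one must keep straight which affine translate ($+c$ on the $A$-part versus $-ab-c$ on the $B$-part) is attached to which factor, so that $g(x)$ and $x-g(x)$ genuinely lie in $bA+c$ and $bB-ab-c$ and part~(a) applies without adjustment. The remaining ingredients --- preservation of directness in~(a) and the implication that a unique representation forces the map in question to be the corresponding projection --- are routine once the Section~1 equivalences are quoted.
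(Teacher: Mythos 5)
Your argument for the factorization claims in (a) and for all of (b) is correct, and since the paper's own proof of this lemma is only the remark that it ``is easy to prove,'' filling in the details via the difference-set criterion $(A-A)\cap(B-B)=\{0\}$ and the parting-projection characterization of decomposer functions is exactly the natural route. There is, however, a genuine mismatch at the ``Moreover'' clause. The displayed identity $P_{\lambda A+d}(x)=\lambda P_{A}(\frac{x}{\lambda})$ cannot be literally correct once $d\neq 0$ or $e\neq 0$: the left-hand side takes values in $\lambda A+d$ while the right-hand side takes values in $\lambda A$, and a concrete check (take $A=\mathbb{Z}$, $B=[0,1)$, $\lambda=1$, $d=\tfrac12$, $e=-\tfrac12$, $x=0$: then $P_{\mathbb{Z}+\frac12}(0)=\tfrac12$ but $P_{\mathbb{Z}}(0)=0$) confirms the failure. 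Your own uniqueness argument, carried out honestly, decomposes $\lambda^{-1}(x-d-e)$ through $\mathbb{R}=A\dot{+}B$ and therefore yields $P_{\lambda A+d}(x)=\lambda P_{A}\bigl(\frac{x-d-e}{\lambda}\bigr)+d$ and $P_{\lambda B+e}(x)=\lambda P_{B}\bigl(\frac{x-d-e}{\lambda}\bigr)+e$, which collapses to the stated formula only when $d=e=0$ (the case discussed in the paragraph preceding the lemma). So you should either record this corrected formula explicitly, or note that the formula in the statement must be read with $d=e=0$; as written, your sentence ``the projection formulas then drop out of uniqueness'' asserts the stated identities even though they are not what your construction produces.

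None of this affects part (b): there you use only the preservation of the factorization from (a) (not the projection formula), together with the membership computation $g(x)\in bA+c$, $x-g(x)\in bB-ab-c$, uniqueness of the representation to conclude $g=P_{bA+c}$, and the identity $P_{A'}^{*}=P_{B'}$ for the second family; that argument, and the specializations to $f^{*}$, $-f(-x)$, $f(x+a)$, $f(x)+c$ and $f_{b}$, are all correct and complete.
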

\begin{proof}
This is easy to prove.
\end{proof}
The above lemma lets us extend the results for some real decomposer functions and related factorizations.
For example, the $b$-floor ($b$-integer part) function $\lfloor x\rfloor_b:=b\lfloor\frac{x}{b}\rfloor$,
$b$-ceiling function $\lceil x\rceil_b:=b\lceil\frac{x}{b}\rceil$, and
$b$-fractional ($b$-decimal) part function $\{ x\}_b=b\{\frac{x}{b}\}$
are decomposer functions with the ranges $\lfloor\mathbb{R}\rfloor_b=b\mathbb{Z}=\langle b\rangle=\lceil\mathbb{R}\rceil_b$, and $\{\mathbb{R}\}_b=\mathbb{R}_b:=b[0,1)
 =\{bd|0\leq d<1\}$.
The $b$-part of a real number $a$ was introduced and studied (with the notations $[a]_b$ and $(a)_b$) by the author. In \cite{MH2,MH3,MH4}
many interesting number theoretic, algebraic, and functional properties of the $b$-parts have been studied.
For example, if $b>0$, then $\{a\}_b$ is the same as the remainder of the generalized division of
$a$ by $b$ (see \cite{MH2}). One important result is the finite $b$-representation of real numbers for
every fixed real number $b\neq0,\pm1$. Also, $(\mathbb{R},+_b)$ is a semigroup
where $x+_by:=\{x+y\}_b$ for every $x,y\in \mathbb{R}$, and
$(\mathbb{R}_b,+_b)$ is the largest subgroup of it (an ideal subgroup). Indeed, $(\mathbb{R}_b,+_b)$ is a grouplike
(introduced in 2013) and $(\mathbb{R}_b,+_b)$ is a type of $f$-representatives groups (and related to $f$-grouplikes).
One can find more information in \cite{MH2,MH3,MH4}.
\\
As another result in this regard, we characterize all interval factors of the additive group of real numbers.
\begin{theorem}
The only real intervals that are factors of the additive group of real numbers are the bounded half-open intervals
$($i.e., intervals of the form $[\alpha,\beta)$, $(\alpha,\beta]$$)$ and $(-\infty,+\infty)$.
\end{theorem}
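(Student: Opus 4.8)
The plan is to establish both directions, the substantial one being that an interval which is a factor of $(\mathbb{R},+)$ must be bounded and half-open, or else all of $\mathbb{R}$. For the easy direction I would note that $(-\infty,+\infty)$ is a factor relative to $\{0\}$, and that each bounded half-open interval is a factor: applying Lemma~3.1(a) to the factorizations $\mathbb{R}=\mathbb{Z}\,\dot{+}\,[0,1)$ and $\mathbb{R}=\mathbb{Z}\,\dot{+}\,(-1,0]$, with $\lambda=\beta-\alpha\neq0$ and suitable shifts, gives $\mathbb{R}=((\beta-\alpha)\mathbb{Z})\,\dot{+}\,[\alpha,\beta)$ and $\mathbb{R}=((\beta-\alpha)\mathbb{Z}+\beta)\,\dot{+}\,(\alpha,\beta]$ for all $\alpha<\beta$. (A single point is also a factor, relative to $\mathbb{R}$; such degenerate intervals are to be regarded as excluded from the statement, and the empty set is not a factor.)

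For the converse, let $I$ be a nondegenerate interval with $\mathbb{R}=I\,\dot{+}\,B$; by definition this means that the translates $I+b$, $b\in B$, are pairwise disjoint and cover $\mathbb{R}$. By Lemma~3.1 the property of being a factor is invariant under the affine maps $x\mapsto\lambda x+d$ with $\lambda\neq0$, so when $I$ is bounded I may assume its closure is $[0,1]$, i.e.\ $I\in\{[0,1),(0,1],[0,1],(0,1)\}$. I would then proceed by cases. \emph{If $I$ is unbounded and $I\neq\mathbb{R}$}, then $I$ is a ray and $I-I=\mathbb{R}$, so the criterion $(I-I)\cap(B-B)=\{0\}$ forces $B-B=\{0\}$; hence $B=\{b\}$ is a singleton and $I+b=\mathbb{R}$, which is impossible since a translate of a ray is a ray. \emph{If $I=(0,1)$}, the tiles $I+b$ are pairwise disjoint nonempty open intervals covering $\mathbb{R}$; if there were at least two of them, then $I+b_0$ and $\bigcup_{b\neq b_0}(I+b)$ would disconnect $\mathbb{R}$ into two nonempty open pieces, a contradiction; so there is a unique tile and $\mathbb{R}=I+b_0$, which is absurd. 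The cases $I=[0,1)$ and $I=(0,1]$ are precisely the ones already realized above.

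The decisive case is $I=[0,1]$ (equivalently, $I$ closed and bounded). Here I would first show that any two distinct $b,b'\in B$ satisfy $|b-b'|>1$: if instead $0<b'-b\le1$, the closed tiles $[b,b+1]$ and $[b',b'+1]$ meet in the nonempty set $[b',b+1]$, contradicting disjointness. Hence $B$ is uniformly $1$-separated, so it is closed and discrete; and since all tiles are bounded while their union is $\mathbb{R}$, the set $B$ is unbounded above and below, and each of its elements has an immediate successor. Taking consecutive elements $b_0<b_1$ of $B$ (necessarily $b_1-b_0>1$), the nonempty open interval $(b_0+1,b_1)$ meets no tile $[b,b+1]$ --- for $b\le b_0$ the tile lies in $(-\infty,b_0+1]$, for $b\ge b_1$ it lies in $[b_1,\infty)$, and there is no element of $B$ strictly between $b_0$ and $b_1$ --- contradicting that the tiles cover $\mathbb{R}$. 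Assembling the cases gives the theorem. I expect this closed-interval case to be the only genuine difficulty: one must rule out $B$ accumulating in a way that would let the endpoints of adjacent closed tiles be absorbed, and the separation bound $|b-b'|>1$ is exactly what prevents this, after which the uncovered gap is unavoidable.
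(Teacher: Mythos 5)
Your proposal is correct, and its outer skeleton matches the paper's: bounded half-open intervals are realized as factors by rescaling/translating the standard factorizations (the paper instead writes down the decomposer maps $\{x\}_{\beta-\alpha}+\alpha$ and $x-\lceil x\rceil_{\beta-\alpha}+\alpha$, which is the same content), the general bounded closed and open cases are reduced by affine invariance (Lemma~3.1) to $[0,1]$ and $(0,1)$, and rays are killed by $I-I=\mathbb{R}$ together with criterion (2) of Section~1 --- a step the paper compresses into one sentence and you spell out fully. Where you genuinely diverge is in the two exclusion arguments. For $[0,1]$ the paper normalizes $0\in A$, notes the tile through $2$ sits at some $a_0>1$, and then covers the midpoint $\frac{a_0+1}{2}$ of the gap by a third tile that must overlap one of the first two (closeness of tiles, forced by covering, yields the contradiction); you argue dually: disjointness forces any two translates of $[0,1]$ to be more than $1$ apart, so consecutive elements of $B$ leave the uncovered hole $(b_0+1,b_1)$, contradicting covering. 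Your route needs the small extra facts that $B$ is unbounded in both directions and that immediate successors exist, which you assert; both do follow routinely from the $>1$ separation, so there is no real gap, but the paper's midpoint trick avoids any structural analysis of $B$. For $(0,1)$ the paper simply observes that the tile covering the point $0$ (with $0\in A$ after translation) must overlap the tile $(0,1)$, while you invoke connectedness of $\mathbb{R}$ against a disjoint cover by open tiles --- cleaner conceptually, slightly heavier machinery. Finally, your remark that a degenerate interval $\{c\}$ is a factor relative to $\mathbb{R}$ is a legitimate caveat that the paper's statement leaves implicit.
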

\begin{proof}
First, we show that $[0,1]$ and $[0,1)$ are not factors of $\mathbb{R}$.
If $[0,1]$ is a factor, then $\mathbb{R}=A\dot{+}[0,1]$, for some subset $A$ containing $0$.
Thus $2\in [a_0,a_0+1]$ for some $a_0\in A$ with $a_0> 1$. Since $\frac{a_0+1}{2}\in [a_1,a_1+1]$,
for some $a_1\in A$, then $[a_1,a_1+1]$ intersects either $[0,1]$ or $[a_0,a_0+1]$, that is a contradiction.
Also, if $\mathbb{R}=A\dot{+}(0,1)$ for some subset $A$ containing $0$, then
$0\in (a_0,a_0+1)$, for some $a_0\in A$, and so $(0,1)\cap (a_0,a_0+1)\neq\emptyset$ that is again a contradiction.\\
Now, fix real numbers $\alpha<\beta$.
If $f$ is a real decomposer function with the range $[\alpha,\beta]$ (resp. $(\alpha,\beta)$), then the function
$f_{\frac{1}{\beta-\alpha}}(x)-\frac{\alpha}{\beta-\alpha}$ with the range
$[0,1]$ (resp. $(0,1)$) is so . Therefore, no factor of these forms
exist.\\
Also, the decomposer maps
$$
f(x):=\{ x\}_{\beta-\alpha}+\alpha\; , \; g(x):=x-\lceil x\rceil_{\beta-\alpha}+\alpha
$$
with ranges  $[\alpha,\beta)$ and $(\alpha,\beta]$ require that such intervals are factors.
Finally, if $I$ is an interval other than the mentioned intervals, then $I-I=\mathbb{R}$, and this completes the  proof.
\end{proof}
\begin{rem}
Similar to the uniqueness conditions for the floor, ceiling, and fractional functions, one can drive the general solutions of $(1.1)$ 
with the condition that
  $f(\mathbb{R})$ or $f^*(\mathbb{R})$ equals to either $[\alpha,\beta)$ or $(\alpha,\beta]$, and then one
  can obtain related characterizations.
For example, for the $b$-parts functions we have:
the general solution of the equation $(1.1)$ with the condition $f^*(\mathbb{R})=\mathbb{R}_b$ is
  $f(x)=\lfloor x-c\rfloor_b+c$æ for all $($constant$)$ real numbers $c$. Moreover, the $b$-floor function
  is the only function
 satisfying $(1.1)$ with the conditions:\\
 $ (1)$ $f^*(\mathbb{R})=\mathbb{R}_b$;\\
 $(2)$ $f(\mathbb{R})\cap \langle b\rangle\neq\emptyset$.
\end{rem}

One of the most important equations regarding the floor and ceiling functions is $f(x+1)=f(x)+1$ (the 1-coperiodic functions, see Remark 2.2).
 The necessary condition for $f$ to be 1-coperiodic is that its domain and image (range) both are 1-periodic. We call a subset $A$
 to be 1-periodic
 if $1+A=A$. The unique direct representation (and general form)  of such subsets is $A=D\dot{+} \mathbb{Z}$ where $D\subseteq [0,1)$
 (see \cite{MH5} for a general discussion in groups and semigroups). Therefore, if  $f(x+1)=f(x)+1$, then $\mathbb{Z}\subseteq Im(f)$
 if and only if  $\mathbb{Z}\cap Im(f)\neq\emptyset$. Also, if $f$ is 1-coperiodic and $Im(f)\subseteq \mathbb{Z}$
 (equivalently $Im(f)=\mathbb{Z}$), then $f$ is a decomposer function. In fact, we have $f(x+k)=f(x)+k$, for all $k\in \mathbb{Z}$, and so
 $$f(f^*(x)+f(y))=f(x+f(y)-f(x))=f(x)+f(y)-f(x)=f(y),$$
 for all $x,y$ in the related domain.
  Hence, if $H$ is a subgroup of $(\mathbb{R},+)$ containing 1, and $f:H\rightarrow \mathbb{Z}$ is 1-coperiodic,
  then $f:H\rightarrow H$ is a decomposer function and $H=\mathbb{Z}\dot{+}f^*(H)$. Note that here $f^*(H)$ is a transversal
  of $\mathbb{Z}$ in $H$ and perhaps it has infinitely many choices.
   For the case $H=\mathbb{R}$  there are uncountably many choices for $f^*(H)$, and for $H=\mathbb{Z}$, $f^*(H)$ is a singleton.
   But for the case $H=\mathbb{Q}$ we obtain an interesting result by using Theorem 8
  from \cite{Eis} and the above explanations.
  \begin{lem}
  If $\mathbb{Q}=\mathbb{Z}\dot{+}A$ and $P_A(\frac{P_A(x)}{n})=P_A(\frac{x}{n})$, for all $x\in \mathbb{Q}$ and $n\in \mathbb{Z}_+$,
  then either $A=[0,1)\cap \mathbb{Q}$ or $A=(-1,0]\cap \mathbb{Q}$ $($and vice versa$)$.
  \end{lem}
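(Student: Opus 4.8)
The plan is to reduce the statement to Theorem~8 of \cite{Eis} via the correspondence between factorizations $\mathbb{Q}=\mathbb{Z}\dot{+}A$ and $1$-coperiodic integer-valued functions that was set up in the discussion above.

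First I would pass from the set $A$ to the function $f:=P_{\mathbb{Z}}=\iota_{\mathbb{Q}}-P_A:\mathbb{Q}\rightarrow\mathbb{Z}$, the integer-part projection of the given factorization; it satisfies $f(\mathbb{Q})=\mathbb{Z}$, $f^{*}(\mathbb{Q})=P_A(\mathbb{Q})=A$, and $P_A=\iota_{\mathbb{Q}}-f$. Since $\mathbb{Z}$ is a subgroup of $(\mathbb{Q},+)$, uniqueness of the direct representation $x=P_{\mathbb{Z}}(x)+P_A(x)$ gives at once $f(x+k)=f(x)+k$ for all $x\in\mathbb{Q}$, $k\in\mathbb{Z}$; that is, $f$ is $1$-coperiodic, so the first of Eisenberg's two conditions already holds. (This is exactly the $H=\mathbb{Q}$ instance of the observation made before the lemma: a $1$-coperiodic $f$ with full integer range is a decomposer function and yields $\mathbb{Q}=\mathbb{Z}\dot{+}f^{*}(\mathbb{Q})$.)

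The main step is then to show that the functional identity in the hypothesis is equivalent, for this $f$, to Eisenberg's second condition $f(\tfrac1n f(nx))=f(x)$ for all $x\in\mathbb{Q}$ and $n\in\mathbb{Z}_{+}$. I would do this by replacing $x$ with $nx$ (a bijection of $\mathbb{Q}$ for each fixed $n$), substituting $P_A=\iota_{\mathbb{Q}}-f$ throughout, and simplifying; the outer argument $\tfrac1n f(nx)$ is precisely what occurs on Eisenberg's side, and after cancelling the non-nested terms the identity collapses to $f(\tfrac1n f(nx))=f(x)$. I expect this to be the step that needs the most care, since one has to keep track of which of the nested quantities are integers before applying the coperiodicity relation (which may only be used for integer shifts), and it is this integrality that makes the normalization work out.

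Finally, with both of Eisenberg's conditions verified, Theorem~8 of \cite{Eis} forces $f=\lfloor\;\rfloor|_{\mathbb{Q}}$ or $f=\lceil\;\rceil|_{\mathbb{Q}}$; hence $A=f^{*}(\mathbb{Q})=\{\,x-f(x):x\in\mathbb{Q}\,\}$ equals $[0,1)\cap\mathbb{Q}$ in the first case and $(-1,0]\cap\mathbb{Q}$ in the second. For the converse I would simply check directly that these two sets satisfy the hypotheses: $\mathbb{Q}=\mathbb{Z}\dot{+}([0,1)\cap\mathbb{Q})$ and $\mathbb{Q}=\mathbb{Z}\dot{+}((-1,0]\cap\mathbb{Q})$ are the restrictions to $\mathbb{Q}$ of $\mathbb{R}=\mathbb{Z}\dot{+}[0,1)$ and $\mathbb{R}=\mathbb{Z}\dot{+}(-1,0]$, and for the corresponding projections (namely $P_A=\{\;\}$, resp. $P_A=\iota_{\mathbb{Q}}-\lceil\;\rceil$) the required identity is a routine computation from the standard floor/ceiling identities on rationals, such as $\lfloor\lfloor x\rfloor/n\rfloor=\lfloor x/n\rfloor$ and $\lceil\lceil x\rceil/n\rceil=\lceil x/n\rceil$.
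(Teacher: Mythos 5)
Your skeleton is the same as the paper's: set $f:=P_{\mathbb{Z}}=\iota_{\mathbb{Q}}-P_A$, obtain $f(x+k)=f(x)+k$ from uniqueness of the direct decomposition $\mathbb{Q}=\mathbb{Z}\dot{+}A$, feed $f$ into Theorem 8 of \cite{Eis}, and read off $A=f^*(\mathbb{Q})$; the paper does exactly this (it checks $f(x+1)=f(x)+1$ by comparing the two decompositions of $x+1$, then cites Theorem 8).

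The trouble is precisely the step you flag as the delicate one. If the displayed identity is read literally with $P_A$, it does not collapse to Eisele--Hadeler's second condition. Substituting $x\mapsto nx$ and $P_A=\iota_{\mathbb{Q}}-f$ turns $P_A\bigl(\tfrac{P_A(nx)}{n}\bigr)=P_A(x)$ into $f\bigl(x-\tfrac{f(nx)}{n}\bigr)=f(x)-\tfrac{f(nx)}{n}$, and the shift $\tfrac{f(nx)}{n}$ that you would need to move out via coperiodicity is in general not an integer. Worse, since $f$ is integer-valued, this last identity forces $n\mid f(nx)$ for all $x\in\mathbb{Q}$ and $n\in\mathbb{Z}_+$, hence $f\equiv 0$, contradicting $f(k)=f(0)+k$; so no factorization satisfies the literal hypothesis. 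Consistently, the converse you plan to dismiss as routine also fails under the literal reading: for $A=[0,1)\cap\mathbb{Q}$, $x=1$, $n=2$ one gets $P_A\bigl(\tfrac{P_A(1)}{2}\bigr)=\{0\}=0$ while $P_A\bigl(\tfrac12\bigr)=\tfrac12$. The way out is to read the iterated identity for the integer projection, i.e.\ $P_{\mathbb{Z}}\bigl(\tfrac{P_{\mathbb{Z}}(x)}{n}\bigr)=P_{\mathbb{Z}}\bigl(\tfrac{x}{n}\bigr)$ (evidently the intended statement): then $x\mapsto nx$ gives $f\bigl(\tfrac1n f(nx)\bigr)=f(x)$ immediately, with no integrality bookkeeping, both conditions of Theorem 8 hold, and the remainder of your argument --- including the converse via $\lfloor\lfloor x\rfloor/n\rfloor=\lfloor x/n\rfloor$ and $\lceil\lceil x\rceil/n\rceil=\lceil x/n\rceil$ --- goes through exactly as in the paper, whose own proof tacitly assumes this reading and never translates the $P_A$-identity at all. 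So: same approach as the paper, but the ``cancellation'' you describe is not available for the statement as printed; you must either correct the hypothesis to the $P_{\mathbb{Z}}$ form or replace it by its genuine $P_A$-translation $P_A\bigl(x-\tfrac{P_A(nx)}{n}\bigr)=P_A(x)-\tfrac{P_A(nx)}{n}$ before invoking \cite{Eis}.
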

  \begin{proof}
 Let  $\mathbb{Q}=\mathbb{Z}\dot{+}A$ and put $f:=P_\mathbb{Z}$ (the first projection related to this factorization).
 Then,
 $$
 f^*(x)+(f(x)+1)=x+1=f^*(x+1)+f(x+1),
 $$
 hence $f(x+1)=f(x)+1$ (this is also concluded from Theorem 3.6 of \cite{MH3}),
 since $f(x)+1\in \mathbb{Z}$. Thus, $f$ satisfies the conditions of Theorem 8 of \cite{Eis}, and so
 either $f=\lfloor\;\rfloor\arrowvert_\mathbb{Q}$ or $f=\lceil\;\rceil\arrowvert_\mathbb{Q}$. Therefore, either
 $A=f^*(\mathbb{Q})=[0,1)\cap \mathbb{Q}$ or $A=f^*(\mathbb{Q})=(-1,0]\cap \mathbb{Q}$.
  \end{proof}
\section*{Acknowledgement}
 The author is grateful for the valuable comments of Ali Mohammad Karparvar, who helped to improve the quality of the paper.

\end{document}